\documentclass[12pt,reqno]{amsart}
\usepackage{a4wide}

\numberwithin{equation}{section}

\newtheorem{theorem}{Theorem}[section]
\newtheorem{proposition}[theorem]{Proposition}

\newtheorem{lemma}[theorem]{Lemma}

\theoremstyle{definition}

\theoremstyle{remark}

\newcommand{\ep}{\varepsilon}

\newcommand{\R}{\mathbb R_+^N}

\newcommand{\intr}{\int_{\mathbb R_+^N}}

\newcommand{\intpr}{\int_{\partial \mathbb R_+^N}}

\begin{document}
\title{Liouville Type Theorem for Some Nonlocal Elliptic Equations}
\author{Xiaohui  Yu$^{*}$}
\thanks{*School of Mathematics and Statistics, Shenzhen University, Shenzhen,
Guangdong 518060, PR China(xiaohui.yu@szu.edu.cn)\\
 {\bf Mathematics Subject Classification (2010)}: 35J60, 35J57,
35J15.} \maketitle

{\scriptsize  {\bf Abstract:} In this paper, we prove some Liouville theorem for the following elliptic equations involving nonlocal nonlinearity and nonlocal boundary value condition
$$
\left\{
  \begin{array}{ll}
  \displaystyle
-\Delta u(y)=\intpr \frac{
F(u(x',0))}{|(x',0)-y|^{N-\alpha}}dx'g(u(y)),    &y\in\R,    \\
\\ \displaystyle
 \frac{\partial u}{\partial \nu}(x',0)=\intr \frac{G(u(y))}{|(x',0)-y|^{N-\alpha}}\,dy f(u(x',0)), &(x',0)\in\partial \mathbb R_+^N,
\end{array}
\right.
$$
where $\mathbb R_+^N=\{x\in \mathbb R^N:x_N>0\}$, $f,g,F,G$ are some nonlinear functions. Under some assumptions on the nonlinear functions $f,g,F,G$, we will show that this equation doesn't possess nontrivial positive solution. We extend the Liouville theorems from local problems to nonlocal problem. We use the moving plane method to prove our result.

{\bf keywords:}\;\;Liouville type theorem, Moving plane method, Maximum principle, nonlocal problem.\\

\vspace{3mm} } \maketitle

\section {\bf Introduction}

In the studying of the existence of solutions for non-variational elliptic equations on bounded domain, we usually use the topological methods such as the Leray-
Schauder degree theory to get the existence results. In order to apply such a theory, a priori bound on the solution is usually needed. As far as we know, the blow-up method is the most powerful tool for proving a priori bound. The spirit of blow up method is straightforward. Suppose on the contrary that there exists a sequence of solutions $\{u_n\}$ with $M_n=u_n(x_n)=\|u_n\|_{L^\infty(\Omega)}\to \infty$, then we make a scaling on this sequence of solutions and get $v_n(x)=\frac 1{M_n}u_n(M_n^k x+x_n)$ which is bounded. Hence, by the regularity theory of elliptic equations, we can assume that $v_n\to v$ in $C_{loc}^{2,\gamma}(\Omega_\infty)$ with $\|v\|_{L^\infty}=1$ and satisfying some limit equation in $\Omega_\infty$, where either $\Omega_\infty=\mathbb R^N$ or $\Omega_\infty=\mathbb R_+^N$ depending on the speed of $x_n$ goes to the boundary of $\Omega$. On the other hand, if we can prove the limit equations don't possess nontrivial solutions, then we get a contradiction, hence the solutions of the original problem must be bounded. From the descriptions of the blow-up procedure, it is easy to see that a priori bound of elliptic equations on bounded domain is equivalent to the Liouville type theorems for the limit equations. Hence, from the past few decades, Liouville theorems for elliptic equations have attracted much attention of scientists and many results were obtained. The most remarkable
result on this aspect is the results in \cite{GS1}, in which the authors studied the nonexistence
results for the following elliptic equation
\begin{equation}\label{1.1}
-\Delta u=u^p\quad {\rm in}\quad \mathbb R^N,\quad u\geq 0.
\end{equation}
The authors proved, among other things, that problem \eqref{1.1} does not possess positive solution
provided $0<p<\frac{N+2}{N-2}$. Moreover, this result is optimal in the sense that for any $p\geq
\frac{N+2}{N-2}$, there are infinitely many positive solutions to problem
(\ref{1.1}). Thus the Sobolev exponent $ \frac {N+2}{N-2}$ is the
dividing exponent between existence and nonexistence of positive
solutions. For this reason, the exponent $ \frac {N+2}{N-2}$ is usually called the critical exponent for equation \eqref{1.1}. Later, in order to get a priori bound for some non-variational elliptic equations on bounded domains, the authors also studied a similar problem in half space
\begin{equation}\label{1.2}
\left\{
  \begin{array}{ll}
  \displaystyle
-\Delta u=u^p   &{\rm in}\quad \mathbb R_+^N,    \\
\displaystyle
\\ u=0 &{\rm on}\quad \partial \mathbb R_+^N
\end{array}
\right.
\end{equation}
in \cite{GS}. They proved that problem \eqref{1.2} also does not possess positive solution for $0<p<\frac {N+2}{N-2}$. Later, in order to simplify the proofs in \cite{GS} and \cite{GS1}, W.Chen and C.Li proved similar
results by using the moving plane method in \cite{CL}. The idea of \cite{CL} is very creative,
they proved the solution is symmetric in
every direction and with respect to every point, hence the solution
must be a constant, finally, they deduced from the equation that the constant must be zero. After the results of \cite{CL}, the moving plane method or its variant the moving sphere method were widely used in proving Liouville theorems for elliptic equations, see\cite{DG}\cite{Da}\cite{FF}\cite{GL}\cite{GL1}\cite{GLZ}\cite{LZ}\cite{Yu3}\cite{Yu4}\cite{Yu5} and the references therein. At the same time, the moving plane method was also widely used in proving Liouville theorems for integral equations, integral systems and fractional Laplacian equations, we refer the readers to \cite{CLL}\cite{CLO}\cite{CFL}\cite{CFY}\cite{CLZ}\cite{FC}\cite{Li}\cite{LZou}\cite{MC}\cite{Yu}\cite{Yu1} for more details.

Among the above works, we should mention the
 paper \cite{DG}. In this paper,
L.Damascelli and F.Gladiali studied the nonexistence of
weak positive solution for the following nonlinear problem with general
nonlinearity
\begin{equation}\label{1.3}
-\Delta u=f(u)\quad {\rm in}\quad \mathbb R^N,\quad u\geq 0,
\end{equation}
where $f$ is only assumed to be continuous rather than Lipschitz continuous. Since no Lipschitz continuous assumption is made on $f$, then the weak solution of \eqref{1.3} is usually not of $C^2$ class. Hence, the moving plane method based on the classical maximum principles in \cite{CL} does not work. In order to overcome this difficulty, the authors used some integral inequalities to substitute the classical maximum principles, an idea
originally due to S.Terracini's work \cite{Te1} and \cite{Te2}. Finally, using the moving plane method based on the integral inequalities, they
proved the only nonnegative solution for problem \eqref{1.3} is $u\equiv c$ providing $f$ is increasing and subcritical.
After the work of \cite{DG}, many nonexistence results for elliptic equations
with general nonlinearities were obtained, we refer the readers to
\cite{GL}\cite{GL1}\cite{GLZ}\cite{LZ}\cite{Yu}\cite{Yu1}\cite{Yu3}\cite{Yu4}.
Also, for Liouville theorems on nonlinear elliptic systems, we refer to
\cite{FF}\cite{Mi}\cite{SZ1}\cite{SZ2}\cite{SZ3}\cite{So} and etc..

In this paper, we study the nonexistence result for some elliptic equation involving nonlocal nonlinearity and nonlocal boundary value condition. The equation is
\begin{equation}\label{1.4}
\left\{
  \begin{array}{ll}
  \displaystyle
-\Delta u(y)=\intpr \frac{
F(u(x',0))}{|(x',0)-y|^{N-\alpha}}dx'g(u(y)),    &y\in\R,    \\
\\ \displaystyle
 \frac{\partial u}{\partial \nu}(x',0)=\intr \frac{G(u(y))}{|(x',0)-y|^{N-\alpha}}\,dy f(u(x',0)), &(x',0)\in\partial \mathbb R_+^N,
\end{array}
\right.
\end{equation}
where $N-2\leq \alpha<N$ and $f,g,F,G$ are some nonlinear functions.
We note that both the nonlinear term and the boundary value condition are nonlocal, so it is different from the equations in the references mentioned above. Moreover, up to now, it seems that there is few Liouville theorem on this kind of problem. To state our main results, we first need to give the definition of weak solution for problem \eqref{1.4}. Let $u\in W_{loc}^{1,2}(\overline{\mathbb R_+^N})\cap C^0(\overline{\mathbb R_+^N})$, if
\begin{eqnarray*}
&&\int_{\mathbb R_+^N}\nabla u(y)\nabla \varphi(y)\,dy \\
   &=&\int_{\mathbb R_+^N} \intpr \frac{
F(u(x',0))}{|(x',0)-y|^{N-\alpha}}dx'g(u(y))\varphi(y)\,dy\\&+&\int_{\partial \mathbb R_+^N}\intr \frac{G(u(y))}{|(x',0)-y|^{N-\alpha}}\,dy f(u(x',0))\varphi(x',0)\,dx'
\end{eqnarray*}
holds for any $\varphi(x)\in C_0^1(\overline{ \mathbb R_+^N})$, then we say $u$ is a weak solution for problem \eqref{1.4}.

With the above preparations and notations, we can state our main result now. We have the following Liouville type theorem.
\begin{theorem}\label{t 1.1}
Let $u\in W_{loc}^{1,2}(\overline{\mathbb R_+^N})\cap C^0(\overline{\mathbb R_+^N})$ be a nonnegative weak solution of
problem \eqref{1.4} with $N-2\leq \alpha <N$, where $f,g,F,G:[0,+\infty)\to [0,+\infty)$ are
continuous functions with the following properties

(i) $f(t),g(t),F(t), G(t)$ are nondecreasing in $(0,+\infty)$.

(ii) $h(t)=\frac{f(t)}{t^{\frac{\alpha}{N-2}}}$, $k(t)=\frac{g(t)}{t^{\frac{2+\alpha}{N-2}}}$, $H(t)=\frac{F(t)}{t^{\frac{N-2+\alpha}{N-2}}}$ and $K(t)=\frac{G(t)}{t^{\frac{N+\alpha}{N-2}}}$ and non-increasing in $(0,+\infty)$.

(iii) At least one of the functions $h,k,H,K$ is not a constant in $(0,\sup_{x\in \mathbb R_+^N}u(x))$ or $(0,\sup_{x\in \partial\mathbb R_+^N }u(x))$.

Then  $u\equiv c$ with $G(c)=F(c)=0$.
\end{theorem}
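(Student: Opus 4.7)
The plan is to apply the moving plane method in its integral form (in the spirit of Damascelli--Gladiali, adapted to the nonlocal half-space setting), which is well suited here because only continuity of $f,g,F,G$ is assumed. Since $\R$ is invariant under reflection across $T_\lambda^{(i)}=\{x_i=\lambda\}$ only for $i\in\{1,\dots,N-1\}$, only these $N-1$ tangential directions can be exploited; this will give that $u$ is symmetric, hence independent of $x_i$, in each such direction, so $u=u(x_N)$, after which a direct analysis of the reduced one-dimensional equation will produce the claimed conclusion. For a fixed such $i$ and $\lambda\in\mathbb R$, set $\Sigma_\lambda=\{x\in\R:x_i<\lambda\}$, let $x^\lambda$ denote the reflection across $T_\lambda^{(i)}$, and put $u_\lambda(x)=u(x^\lambda)$ and $w_\lambda=u-u_\lambda$. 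Since the reflection preserves $\BR$ and the Euclidean distance, a change of variables shows that $u_\lambda$ is also a weak solution of \eqref{1.4}.

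Subtracting the two weak formulations and testing with $\varphi=w_\lambda^{+}$ (extended by zero outside $\Sigma_\lambda$ and approximated by admissible $C_0^1$ functions) yields
\begin{equation*}
\intl|\nabla w_\lambda^{+}|^{2}\,dy = I_V(\lambda)+I_B(\lambda),
\end{equation*}
where $I_V,I_B$ collect the differences of the volume and boundary nonlinear contributions. Splitting each inner Riesz integral into its parts over $\partial\Sigma_\lambda$ and its reflection, and using the elementary inequality $|(x',0)-y|\le|(x',0)-y^\lambda|$ valid for $y\in\Sigma_\lambda$ and $x'\in\partial\Sigma_\lambda$, one discards the unfavorably signed cross terms and keeps only the contributions involving $F(u)-F(u_\lambda)$, $g(u)-g(u_\lambda)$, $G(u)-G(u_\lambda)$ and $f(u)-f(u_\lambda)$, which are nonnegative on $\{w_\lambda^{+}>0\}$ by condition (i). On this set $u>u_\lambda\ge 0$, and writing $F(t)=H(t)t^{(N-2+\alpha)/(N-2)}$, and so on, the monotonicity of $H,k,K,h$ given by (ii) yields pointwise bounds of the form
\begin{equation*}
F(u)-F(u_\lambda)\le c\,H(u_\lambda)\,u^{\alpha/(N-2)}\,w_\lambda^{+},
\end{equation*}
together with three analogues bearing exponents $2/(N-2)$, $(2+\alpha)/(N-2)$ and $\alpha/(N-2)$. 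Combining these with the Hardy--Littlewood--Sobolev inequality for the Riesz potential, Sobolev's embedding $W^{1,2}(\R)\hookrightarrow L^{2N/(N-2)}(\R)$ and the trace embedding $W^{1,2}(\R)\hookrightarrow L^{2(N-1)/(N-2)}(\BR)$, the four critical exponents of (ii) fit together exactly to produce
\begin{equation*}
I_V(\lambda)+I_B(\lambda)\le\eta(\lambda)\intl|\nabla w_\lambda^{+}|^{2}\,dy,
\end{equation*}
with $\eta(\lambda)\to 0$ as $\lambda\to-\infty$ by dominated convergence. Hence $w_\lambda^{+}\equiv 0$ in $\Sigma_\lambda$ for all sufficiently negative $\lambda$.

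Set $\lambda_0=\sup\{\lambda:w_\mu^{+}\equiv 0\text{ in }\Sigma_\mu,\,\forall\mu\le\lambda\}$. If $\lambda_0<+\infty$, a strict-inequality refinement of the key estimate (in which hypothesis (iii) excludes the scale-invariant borderline case $h,k,H,K\equiv\mathrm{const}$) together with the continuity of $\lambda\mapsto w_\lambda^{+}$ contradicts maximality of $\lambda_0$. Thus $\lambda_0=+\infty$, and running the argument from the opposite side yields that $u$ is symmetric about every hyperplane $T_\lambda^{(i)}$, hence independent of $x_i$. Repeating over every $i<N$ gives $u=u(x_N)$. Substituting back into \eqref{1.4}, $F(u(x',0))=F(u(0))$ is constant in $x'$ and $\intpr|(x',0)-y|^{\alpha-N}dx'$ is either $+\infty$ (when $\alpha\ge 1$, in which case finiteness of the weak formulation forces $F(u(0))=0$) or a positive constant times $y_N^{\alpha-1}$ (when $\alpha<1$, in which case the resulting strictly-concave ODE on $[0,\infty)$ combined with $u\ge 0$ again forces $F(u(0))=0$); an analogous analysis of the boundary equation forces $G(u)\equiv 0$. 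Consequently $-u''\equiv 0$ with $u'(0)\le 0$ and $u\ge 0$, giving $u\equiv c$ with $F(c)=G(c)=0$.

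The main obstacle will be the integral estimate of the second paragraph with $\eta(\lambda)\to 0$: each of the four nonlinear terms couples a Riesz-potential nonlocality with a pointwise nonlinearity, and the Hardy--Littlewood--Sobolev together with Sobolev and trace embeddings must interpolate so that the four critical exponents $\alpha/(N-2)$, $(2+\alpha)/(N-2)$, $(N-2+\alpha)/(N-2)$, $(N+\alpha)/(N-2)$ supplied by (ii) absorb precisely into the $L^{2N/(N-2)}$ and $L^{2(N-1)/(N-2)}$ Sobolev norms. The strict-improvement refinement required to slide the plane past $\lambda_0$ is the other delicate step and is exactly where (iii) enters.
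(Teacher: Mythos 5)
Your overall architecture is right in outline: moving plane in the integral/Terracini form, exploit the structure of $h,k,H,K$ from hypothesis (ii), use Hardy--Littlewood--Sobolev together with Sobolev and trace embeddings to produce a small contraction constant, start the plane far out, slide it to the critical position, and reduce to a one-dimensional problem. But there is a genuine gap, and it is precisely the one the paper flags as the reason its argument is set up the way it is.

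You run the moving plane directly on $u$ and assert that the contraction constant $\eta(\lambda)$ tends to $0$ as $\lambda\to-\infty$ ``by dominated convergence.'' No such conclusion is available: nothing in the hypotheses gives any decay of $u$ at infinity, nor even $u\in L^{2N/(N-2)}(\mathbb R_+^N)$. The quantities you need to become small are norms of the type $\|u\|_{L^{2N/(N-2)}(\Sigma_\lambda)}$, $\|u(\cdot,0)\|_{L^{2(N-1)/(N-2)}(\partial\Sigma_\lambda)}$, and norms of the nonlocal coefficients, over the moving half-slab; if $u$ does not decay these can be infinite for every $\lambda$, and dominated convergence has no dominating function to use. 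This is why the paper first performs a Kelvin transform $v(x)=|x-x_p|^{2-N}u\bigl(\tfrac{x-x_p}{|x-x_p|^2}+x_p\bigr)$ (and analogous transforms $w,z$ of the two Riesz potentials $m,n$), which forces $v(x)\le C|x-x_p|^{2-N}$, $w(x)\le C|x-x_p|^{\alpha-N}$, $z(x')\le C|x'-x_p'|^{\alpha-N}$ for $|x-x_p|\ge 1$; these decay rates are exactly what make the small-constant estimate (\ref{2.12})--(\ref{2.24}) possible, both to start the plane (Lemma \ref{3.1}) and to pass the critical position (Lemma \ref{t 3.2}). Without the Kelvin transform your second paragraph cannot be made rigorous.

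Two further, smaller discrepancies follow from the same omission. First, the Kelvin transform introduces a possible singularity at the reflected pole $p^\lambda$, so one cannot simply test the weak formulation with $(v-v^\lambda)^+$; the paper inserts a two-sided cutoff $\eta_\varepsilon$, shows the error term $I_\varepsilon\to 0$ using $|\nabla\eta_\varepsilon|\in L^N$, and only then lets $\varepsilon\to 0$. Your proposal tests directly and silently assumes the integrals are finite, which is not justified without decay of $u$. Second, your plane slides to $+\infty$ to give symmetry in every tangential hyperplane, but in the Kelvin-transformed picture $v$ is singular at $x_p$ and the plane cannot pass that point; the paper's actual argument moves the plane from both sides, obtains a critical plane $T_{\lambda_1}$ (or $T_{\lambda_1'}$), and then the dichotomy of Proposition \ref{t 3.3} is: either $\lambda_1=\lambda_1'=0$ for every base point $p$ and every tangential direction (giving $u=u(x_N)$), or else $v,w,z$ are symmetric about some $T_{\lambda_1}\ne T_0$, which forces $v$ to be regular at the origin and, via \eqref{2.5}, forces $h,k,H,K$ to be constants, contradicting (iii). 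Your strict-improvement step gestures at the role of (iii) but does not supply this mechanism. The one place where your write-up is arguably more careful than the paper is the final ODE reduction, where you note that $\int_{\partial\mathbb R_+^N}|(x',0)-y|^{\alpha-N}\,dx'$ diverges for $\alpha\ge 1$ and hence directly forces $F(u(0))=0$; the paper instead argues from concavity plus nonnegativity. But this does not repair the central gap: without the Kelvin transform the moving plane machinery cannot be started.
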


It is easy to find such nonlinear functions satisfying then requirements in the above theorem. For example, if we choose $f(t)=t^p$, $g(t)=t^q$, $F(t)=t^{p+1}$ and $G(t)=t^{q+1}$ with $0<p<\frac{\alpha}{N-2}$ and $0<q<\frac{2+\alpha}{N-2}$, then $f,g$ satisfies all the hypothesises in Theorem \ref{t 1.1}.

The rest of this paper is devoted to the proof of the above theorem. We still want to use the moving plane method to prove our result.
We prove the solution $u$ must be symmetric with respect to every point $(x'_p,0)\in \mathbb \partial \mathbb R_+^N$ and in every direction of $\mathbb R^{N-1}$. So the solution $u$ depends only on $x_N$. Finally, we deduce from the equation that $u$ must be a constant $c$ with $G(c)=F(c)=0$. In developing the moving plane method, since we don't know the decay behavior of $u$ at infinity, we need to make a Kelvin transformation on $u$, i.e., $v(x)=\frac 1{|x|^{N-2}}u(\frac x{|x|^2})$, then prove the symmetry property of $v$ instead. On the other hand, since we don't assume the solution is of $C^2$ class, we can't use the classical maximum principle in a differential form. Inspired by \cite{DG}\cite{Te1}\cite{Te2}, we use some integral inequality to substitute the maximum principle. In the
rest of this paper, we denote $C$ by a positive constant, which may
vary from line to line.

\section{\bf The integral inequality}

In this section, we make some preparations and prove some useful integral inequality. For convenience, we denote by
$$
m(y)=\int_{\partial \mathbb R_+^N}\frac{F(u(x',0))}{|(x',0)-y|^{N-\alpha}}\,dx'
$$
for $y$ in $\mathbb R_+^N$
and
$$
n(x')=\int_{ \mathbb R_+^N}\frac{G(u(y))}{|(x',0)-y|^{N-\alpha}}\,dy
$$
for $x'\in \mathbb R^{N-1}$, i.e., $(x',0)\in \partial \mathbb R_+^N$.
Then equation \eqref{1.4} can be written as
\begin{equation}\label{2.1}
\left\{
  \begin{array}{ll}
  \displaystyle
-\Delta u(y)=m(y)g(u(y)),    &y\in\R,    \\
\\ \displaystyle
 \frac{\partial u}{\partial \nu}(x',0)=n(x')f(u(x',0)), &(x',0)\in\partial \mathbb R_+^N.
\end{array}
\right.
\end{equation}
We want to use the moving plane method to prove our result. Since we don't know the decay of $u$, we need to make a Kelvin transformation on $u(y),m(y)$ and $n(x')$. For any $x_p=(x_p',0)\in \partial \mathbb R_+^N$, we define the Kelvin transformation $v,w,z$ of $u,m,n$ at $x_p$ as
\begin{equation}\label{2.2}
v(x)=\frac 1{|x-x_p|^{N-2}}u(\frac {x-x_p}{|x-x_p|^2}+x_p), \quad w(x)=\frac 1{|x-x_p|^{N-\alpha}}m(\frac {x-x_p}{|x-x_p|^2}+x_p)
\end{equation}
and
\begin{equation}\label{2.3}
z(x')=\frac 1{|x'-x_p'|^{N-\alpha}}n(\frac {x'-x_p'}{|x'-x_p'|^2}+x_p'),
\end{equation}
then it is easy to see that
\begin{equation}\label{2.4}
v(x)\leq \frac C{|x-x_p|^{N-2}},\ w(x)\leq \frac C{|x-x_p|^{N-\alpha}}\ {\rm and}\ z(x')\leq \frac C{|x'-x_p'|^{N-\alpha}}
\end{equation}
for $|x-x_p|\geq 1$ and they may have singularities at $x_p$. In the following, we can assume $x_p=0$ without loss of generality.
Moreover, a direct calculation shows that they satisfy the following equation
\begin{equation}\label{2.5}
\left\{
  \begin{array}{ll}
    \displaystyle
w(y)=\int_{\partial \mathbb R_+^N}\frac{H(|\bar x|^{N-2}v(\bar x,0))}{|(\bar x,0)-y|^{N-\alpha}}v(\bar x,0)^{\frac{N-2+\alpha}{N-2}}\,d\bar x,    &y\in\R\setminus\{0\},    \\
  \displaystyle
z(\bar x)=\int_{ \mathbb R_+^N}\frac{K(|y|^{N-2}v(y))}{|(\bar x,0)-y|^{N-\alpha}}v(y)^{\frac{N+\alpha}{N-2}}dy,   &\bar x\in \partial\R\setminus \{0\},    \\
  \displaystyle
-\Delta v(y)=w(y)k(|y|^{N-2}v(y))v(y)^{\frac{2+\alpha}{N-2}},    &y\in\R\setminus\{0\},    \\
\\ \displaystyle
 \frac{\partial v}{\partial \nu}(x',0)=z(x')h(|x'|^{N-2}v(x',0))v(x',0)^{\frac \alpha{N-2}}, &(x',0)\in\partial \mathbb R_+^N\setminus\{0\},
\end{array}
\right.
\end{equation}
where $H(t),K(t),h(t),k(t)$ are defined in assumption (ii) in Theorem \ref{t 1.1}.

In order to use the moving plane method, we need to introduce some notations which will be used later. For any $\lambda>0$, we denote by $\Sigma_\lambda=\{x\in \mathbb R_+^N|x_1>\lambda\}$, $\partial\Sigma_\lambda=\{x\in \partial\mathbb R_+^N|x_1>\lambda\}$, $T_\lambda=\{x\in \mathbb R_+^N|x_1=\lambda\}$. Moreover, for any $x\in \Sigma_\lambda$, we denote by $x^\lambda$ the reflection of $x$ with respect to $T_\lambda$, i.e., $x^\lambda=(2\lambda-x_1,x_2,...,x_N)$. In the following, we also define $v^\lambda(x)=v(x^\lambda)$ and $p^\lambda=(2\lambda,0,...,0)$.

With the above preparations, we have the following
\begin{lemma}\label{t 2.1}
For any $\lambda>0$, if we denote $\Sigma_\lambda^v=\{x\in \Sigma_\lambda| v(x)>v(x^\lambda)\}$ and $\partial\Sigma_\lambda^v=\{x\in \partial\Sigma_\lambda| v(x)>v(x^\lambda)\}$, then for any $y\in \Sigma_\lambda\setminus \{(2\lambda,0,...,0)\}$ and $\bar x\in \partial \Sigma_\lambda\setminus \{(2\lambda,0,...,0)\}$, we have
\begin{equation}\label{2.6}
w(y)-w(y^\lambda)\leq \int_{\partial \Sigma_\lambda^v}\frac 1{|(\bar x,0)-y|^{N-\alpha}}H(|\bar x|^{N-2}v(\bar x,0))[v(\bar x,0)^{\frac{N-2+\alpha}{N-2}}-v(\bar x^\lambda,0)^{\frac{N-2+\alpha}{N-2}}]\,d\bar x
\end{equation}
and
\begin{equation}\label{2.7}
z(\bar x)-z(\bar x^\lambda)\leq \int_{\Sigma_\lambda^v}\frac 1{|(x',0)-y|^{N-\alpha}}K(|y|^{N-2}v(y))[v(y)^{\frac{N+\alpha}{N-2}}-v(y^\lambda)^{\frac{N+\alpha}{N-2}}]\,dy.
\end{equation}
\end{lemma}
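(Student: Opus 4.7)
The plan is to derive (2.6) and (2.7) by an integral-form moving-plane decomposition: split the integration domain along $T_\lambda$, change variables in the reflected piece, exploit the strict positivity of the kernel difference for $y\in\Sigma_\lambda$, and then bound the remaining nonlinear integrand using hypotheses (i) and (ii).

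Starting from the representation of $w(y)$ in (2.5), I first write
\[
w(y)=\int_{\partial\Sigma_\lambda}\frac{H(|\bar x|^{N-2}v(\bar x,0))\,v(\bar x,0)^{\frac{N-2+\alpha}{N-2}}}{|(\bar x,0)-y|^{N-\alpha}}\,d\bar x+\int_{\partial\mathbb R_+^N\setminus\partial\Sigma_\lambda}(\cdots)\,d\bar x
\]
and change variables $\bar x\mapsto\bar x^\lambda$ in the second piece so that both integrals live on $\partial\Sigma_\lambda$. Repeating the expansion for $w(y^\lambda)$ and using the isometry identities $|(\bar x^\lambda,0)-y^\lambda|=|(\bar x,0)-y|$ and $|(\bar x,0)-y^\lambda|=|(\bar x^\lambda,0)-y|$, subtraction gives
\[
w(y)-w(y^\lambda)=\int_{\partial\Sigma_\lambda}\Bigl(\tfrac{1}{|(\bar x,0)-y|^{N-\alpha}}-\tfrac{1}{|(\bar x^\lambda,0)-y|^{N-\alpha}}\Bigr)\,\bigl[\Phi(\bar x)-\Phi(\bar x^\lambda)\bigr]\,d\bar x,
\]
where I set $\Phi(\bar x):=H(|\bar x|^{N-2}v(\bar x,0))\,v(\bar x,0)^{\frac{N-2+\alpha}{N-2}}$. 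For $y\in\Sigma_\lambda$ and $\bar x\in\partial\Sigma_\lambda$, both first coordinates exceed $\lambda$, so $|(\bar x,0)-y|<|(\bar x^\lambda,0)-y|$ and the kernel difference is strictly positive.

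It remains to handle the nonlinear bracket. On $\partial\Sigma_\lambda^v$ I have $v(\bar x,0)>v(\bar x^\lambda,0)$ and $|\bar x|>|\bar x^\lambda|$, so $|\bar x|^{N-2}v(\bar x,0)>|\bar x^\lambda|^{N-2}v(\bar x^\lambda,0)$; the non-increasing property of $H$ in (ii) gives $H(|\bar x|^{N-2}v(\bar x,0))\leq H(|\bar x^\lambda|^{N-2}v(\bar x^\lambda,0))$, from which the upper bound of (2.6) follows by trivial algebra. On the complement $\partial\Sigma_\lambda\setminus\partial\Sigma_\lambda^v$, where $v(\bar x,0)\leq v(\bar x^\lambda,0)$, I claim $\Phi(\bar x)\leq\Phi(\bar x^\lambda)$, so this portion of the integrand is nonpositive against a positive kernel and can simply be dropped from the upper bound. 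To verify the claim I split on the sign of $|\bar x|^{N-2}v(\bar x,0)-|\bar x^\lambda|^{N-2}v(\bar x^\lambda,0)$: when it is $\leq 0$ I rewrite $\Phi(\bar x)=F(|\bar x|^{N-2}v(\bar x,0))/|\bar x|^{N-2+\alpha}$ and combine $F$ nondecreasing (hypothesis (i)) with $|\bar x|>|\bar x^\lambda|$; when it is $>0$ I use $H$ non-increasing together with $v(\bar x,0)\leq v(\bar x^\lambda,0)$ in the original form of $\Phi$. Combining these bounds produces (2.6), and the derivation of (2.7) is structurally identical, with $K$ replacing $H$, the exponent $\tfrac{N+\alpha}{N-2}$ replacing $\tfrac{N-2+\alpha}{N-2}$, and the integration domain $\mathbb R_+^N$ in place of $\partial\mathbb R_+^N$.

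The main obstacle is exactly this dropping step on the ``bad'' region $\partial\Sigma_\lambda\setminus\partial\Sigma_\lambda^v$: the inequality $\Phi(\bar x)\leq\Phi(\bar x^\lambda)$ there must genuinely invoke \emph{both} the monotonicity of $F$ in (i) and the non-increasing quotient $H(t)=F(t)/t^{(N-2+\alpha)/(N-2)}$ in (ii), since neither alone handles both sign possibilities of $|\bar x|^{N-2}v(\bar x,0)-|\bar x^\lambda|^{N-2}v(\bar x^\lambda,0)$. This subcritical-type growth structure is precisely what the hypotheses of Theorem \ref{t 1.1} are designed to encode, and getting (2.6)--(2.7) in the clean one-sided form stated in the lemma is what enables the subsequent moving-plane analysis to proceed without pointwise maximum principles.
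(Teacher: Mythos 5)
Your proof is correct and follows essentially the same route as the paper: reflect the integral over $T_\lambda$, use the kernel isometries to reduce to a single integral over $\partial\Sigma_\lambda$ against a positive kernel difference, bound the integrand on $\partial\Sigma_\lambda^v$ via the non-increasing property of $H$, and drop the complementary set by showing $\Phi(\bar x)\leq\Phi(\bar x^\lambda)$ there. The one cosmetic difference is that the paper proves the bad-set inequality in a single chain, inserting the intermediate quantity $|\bar x|^{N-2}v(\bar x^\lambda,0)$ and applying first $F$ nondecreasing, then $H$ non-increasing (cf.\ its display (2.10)); your case split on the sign of $|\bar x|^{N-2}v(\bar x,0)-|\bar x^\lambda|^{N-2}v(\bar x^\lambda,0)$ is valid but unnecessary, as the paper's chain covers both signs at once with the same two monotonicity hypotheses.
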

\begin{proof}
We only prove equation \eqref{2.7}, the proof of equation \eqref{2.6} is similar. By the equation of $z(\bar x)$, we have
$$
z(\bar x)=\int_{\Sigma_\lambda}\frac{K(|y|^{N-2}v(y))}{|(\bar x,0)-y|^{N-\alpha}}v(y)^{\frac{N+\alpha}{N-2}}dy+\int_{\Sigma_\lambda}\frac{K(|y^\lambda |^{N-2}v(y^\lambda))}{|(\bar x,0)-y^\lambda|^{N-\alpha}}v(y^\lambda)^{\frac{N+\alpha}{N-2}}dy,
$$
and
$$
z(\bar x^\lambda)=\int_{\Sigma_\lambda}\frac{K(|y|^{N-2}v(y))}{|(\bar x^\lambda,0)-y|^{N-\alpha}}v(y)^{\frac{N+\alpha}{N-2}}dy+\int_{\Sigma_\lambda}\frac{K(|y^\lambda |^{N-2}v(y^\lambda))}{|(\bar x^\lambda,0)-y^\lambda|^{N-\alpha}}v(y^\lambda)^{\frac{N+\alpha}{N-2}}dy.
$$
Since $|(\bar x,0)-y^\lambda |=|(\bar x^\lambda,0)-y|$ and $|(\bar x,0)-y|=|(\bar x^\lambda,0)-y^\lambda|$, we deduce from the above two equations that
\begin{equation}\label{2.8}
\begin{split}
&z(\bar x)-z(\bar x^\lambda)\\&=\int_{\Sigma_\lambda}(\frac 1{|(\bar x,0)-y|^{N-\alpha}}-\frac 1{|(\bar x,0)-y^\lambda|^{N-\alpha}})[K(|y|^{N-2}v(y))v(y)^{\frac{N+\alpha}{N-2}}-K(|y^\lambda|^{N-2}v(y^\lambda))v(y^\lambda)^{\frac{N+\alpha}{N-2}}]dy.
\end{split}
\end{equation}
For $y\in \Sigma_\lambda^v=\{x\in \Sigma_\lambda|v(x)>v(x^\lambda)\}$, we have $|y|^{N-2}v(y)>|y^\lambda|^{N-2}v(y^\lambda)$, hence from the monotonicity of $K(t)$, we deduce that
\begin{equation}\label{2.9}
K(|y|^{N-2}v(y))\leq K(|y^\lambda|^{N-2}v(y^\lambda)).
\end{equation}
On the other hand, for $y\in \Sigma_\lambda\setminus \Sigma_\lambda^v$, we have
\begin{equation}\label{2.10}
\begin{split}
K(|y|^{N-2}v(y))v(y)^{\frac{N+\alpha}{N-2}}&=\frac{G(|y|^{N-2}v(y))}{|y|^{N+\alpha}}\\&\leq \frac{G(|y|^{N-2}v(y^\lambda))}{|y|^{N+\alpha}}\\&=
\frac{G(|y|^{N-2}v(y^\lambda))}{[|y|^{N-2}v(y^\lambda)]^{\frac{N+\alpha}{N-2}}}v(y^\lambda)^{\frac{N+\alpha}{N-2}}\\&\leq
\frac{G(|y^\lambda|^{N-2}v(y^\lambda))}{[|y^\lambda|^{N-2}v(y^\lambda)]^{\frac{N+\alpha}{N-2}}}v(y^\lambda)^{\frac{N+\alpha}{N-2}}\\&=
K(|y^\lambda|^{N-2}v(y^\lambda))v(y^\lambda)^{\frac{N+\alpha}{N-2}}.
\end{split}
\end{equation}
Hence we deduce from equations \eqref{2.8}\eqref{2.9}\eqref{2.10} that
\begin{equation}\label{2.11}
\begin{split}
&z(\bar x)-z(\bar x^\lambda)\\&\leq \int_{\Sigma_\lambda^v}\frac 1{|(\bar x,0)-y|^{N-\alpha}}K(|y|^{N-2}v(y))[v(y)^{\frac{N+\alpha}{N-2}}-v(y^\lambda)^{\frac{N+\alpha}{N-2}}]dy.
\end{split}
\end{equation}
This completes the proof of equation \eqref{2.7}.
\end{proof}
\begin{lemma}\label{t 2.2}
Under the assumptions of Theorem \ref{t 1.1} and $v,w,z$ defined as above, then for any fixed $\lambda>0$, the functions $v$ and
$(v-v^\lambda)^+$ belong to $ L^{2^*}(\Sigma_\lambda)\cap
L^\infty(\Sigma_\lambda)$ with $2^*=\frac{2N}{N-2}$. Furthermore,
if we denote $\Sigma_\lambda^v$ and $\partial \Sigma_\lambda^v$ as above, then there exists
$C_\lambda>0$, which is nonincreasing in $\lambda$, such that
\begin{equation}\label{2.12}
\begin{split}
&\int_{\Sigma_\lambda}|\nabla (v-v^\lambda)^+|^2\,dx\\&\leq C_\lambda [\|w(y)\|_{L^{\frac{2N}{N-\alpha}}(\Sigma_\lambda^v)}\|v(y)\|^{\frac{4+\alpha-N}{N-2}}_{L^{\frac{2N}{N-2}}(\Sigma_\lambda^v)}+\|v(\bar x,0)\|_{L^{\frac{2(N-1)}{N-2}}(\partial \Sigma_\lambda^v)}^{\frac \alpha{N-2}}\|v(y)\|_{L^{\frac{2N}{N-2}}(\Sigma_\lambda^v)}^{\frac{2+\alpha}{N-2}}
\\&+\|z(x')\|_{L^{\frac{2(N-1)}{N-\alpha}}(\partial\Sigma_\lambda^v)}\|v(x',0)\|_{L^{\frac{2(N-1)}{N-2}}(\partial\Sigma_\lambda^v)}^{\frac{2+\alpha-N}{N-2}}]\cdot\int_{\Sigma_\lambda}|\nabla (v-v^\lambda)^+|^2\,dx.
\end{split}
\end{equation}
\end{lemma}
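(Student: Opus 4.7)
First I would verify that $v$ and $(v-v^\lambda)^+$ both lie in $L^{2^*}(\Sigma_\lambda)\cap L^\infty(\Sigma_\lambda)$. Since $\Sigma_\lambda$ stays at distance at least $\lambda$ from the Kelvin pole $x_p=0$, the representation \eqref{2.2} together with continuity of $u$ on a compact set gives a uniform $L^\infty$ bound for $v$ on $\Sigma_\lambda$, while the decay \eqref{2.4} delivers the $L^{2^*}$ integrability at infinity; both properties pass to $(v-v^\lambda)^+\le v$, and analogous bounds hold on $\partial\Sigma_\lambda$ via continuity of traces.

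\textbf{Energy identity.} The heart of the plan is to use $\varphi=(v-v^\lambda)^+\eta_\epsilon^2$, with $\eta_\epsilon$ a smooth cutoff vanishing near the reflected pole $p^\lambda$ and at infinity, as test function in the weak form of the $v$-equation in \eqref{2.5} and, separately, in its $\lambda$-reflection. Subtracting the two identities---the boundary integral on $T_\lambda$ vanishes because $(v-v^\lambda)^+=0$ there---and letting $\epsilon\to 0$ by dominated convergence (justified by the integrability above) would yield
\[
\int_{\Sigma_\lambda}|\nabla(v-v^\lambda)^+|^2\,dy\le I+B,
\]
where $I$ and $B$ are the integrals over $\Sigma_\lambda^v$ and $\partial\Sigma_\lambda^v$ of $[wk(a)v^p-w^\lambda k(b)(v^\lambda)^p](v-v^\lambda)^+$ and $[zh(a')v^{p'}-z^\lambda h(b')(v^\lambda)^{p'}](v-v^\lambda)^+$ respectively, with $p=(2+\alpha)/(N-2)$, $p'=\alpha/(N-2)$, $a=|y|^{N-2}v$, $b=|y^\lambda|^{N-2}v^\lambda$.

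\textbf{Decomposition.} I would next split the interior integrand as
\[
wk(a)v^p-w^\lambda k(b)(v^\lambda)^p=(w-w^\lambda)k(b)(v^\lambda)^p+w\bigl[k(a)v^p-k(b)(v^\lambda)^p\bigr].
\]
On $\Sigma_\lambda^v$ we have $a>b$, so $k(a)\le k(b)$ by monotonicity, and the bracket is bounded by $k(b)[v^p-(v^\lambda)^p]\le p\,k(b)v^{p-1}(v-v^\lambda)^+$ by the mean-value theorem (since $p\ge 1$, using $\alpha\ge N-2$). The first summand is handled by substituting the pointwise estimate of Lemma~\ref{t 2.1} for $w-w^\lambda$, turning that piece into a nonlocal double integral weighted by $H(\cdot)[v^{(N-2+\alpha)/(N-2)}-(v^\lambda)^{(N-2+\alpha)/(N-2)}]$ on $\partial\Sigma_\lambda^v$. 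The boundary term $B$ receives the mirror treatment via the $z-z^\lambda$ bound of Lemma~\ref{t 2.1} and the analogous boundary mean-value step. The monotonicity of $h,k,H,K$ together with the $L^\infty$ bound of $v$ on $\Sigma_\lambda$ lets the remaining function-factors be majorised by a constant depending only on $\lambda$, absorbed into $C_\lambda$.

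\textbf{H\"older, Sobolev, and the main obstacle.} Applying H\"older's inequality with the exponents dictated by the critical Sobolev and trace embeddings---$2N/(N-2)$ in the bulk and $2(N-1)/(N-2)$ on the boundary---should produce the three norm-products in \eqref{2.12}; a direct exponent check shows $(4+\alpha-N)/(N-2)=p-1$ and $(2+\alpha-N)/(N-2)=p'-1$, matching the powers arising from the mean-value step, while the remaining triples $(2N/(N-\alpha),2N/(4+\alpha-N),N/(N-2))$ in the bulk and its boundary counterpart are conjugate. The two surviving copies of $(v-v^\lambda)^+$ would combine into squared $L^{2^*}(\Sigma_\lambda)$ or $L^{2(N-1)/(N-2)}(\partial\Sigma_\lambda)$ norms that Sobolev and trace embeddings convert into $\|\nabla(v-v^\lambda)^+\|_{L^2(\Sigma_\lambda)}^2$; monotonicity of $C_\lambda$ in $\lambda$ follows because larger $\lambda$ pushes $\Sigma_\lambda$ further from the singular pole and improves all embedding constants. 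The main obstacle I anticipate is the uniform bounding of the non-increasing functions $k,h,H,K$ when their arguments may approach zero where $v$ is small: one must split $\Sigma_\lambda^v$ into a region where $v$ is bounded below (where direct monotonicity suffices) and a region where $v$ is small (where the smallness of $v^{p-1}$ compensates the potential blow-up of $k$), and verify that the cutoff limit $\epsilon\to 0$ passes through the nonlocal double integrals uniformly.
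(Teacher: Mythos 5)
Your overall strategy---cutoff test functions $\varphi=\eta_\varepsilon^2(v-v^\lambda)^+$, the resulting energy identity over $\Sigma_\lambda^v$ and $\partial\Sigma_\lambda^v$, a pointwise splitting of the nonlinear difference into a $(w-w^\lambda)$-part controlled by Lemma~\ref{t 2.1} and a residual part controlled by monotonicity plus a mean-value bound, and finally H\"older and Sobolev/trace embeddings---is the one the paper follows. But two concrete points need repair.

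\emph{The decomposition.} You keep $k$ evaluated at $b=|y^\lambda|^{N-2}v(y^\lambda)$ in the $(w-w^\lambda)$-piece, writing
$wk(a)v^p-w^\lambda k(b)(v^\lambda)^p=(w-w^\lambda)k(b)(v^\lambda)^p+w\bigl[k(a)v^p-k(b)(v^\lambda)^p\bigr]$,
whereas the paper (on $\{w>w^\lambda\}$) uses
$(w-w^\lambda)k(a)v^p+w^\lambda\bigl[k(a)v^p-k(b)(v^\lambda)^p\bigr]$.
This is not cosmetic. The paper's coefficient is
$k(a)v^p=\dfrac{g\bigl(|y|^{N-2}v(y)\bigr)}{|y|^{2+\alpha}}=\dfrac{g\bigl(u(y/|y|^2)\bigr)}{|y|^{2+\alpha}}$,
and for $y\in\Sigma_\lambda$ the point $y/|y|^2$ stays in the compact set $\overline{B_{1/\lambda}^+}$ while $|y|\geq\lambda$, so this coefficient is majorised by a constant $C_\lambda$ non-increasing in $\lambda$. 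Your coefficient is
$k(b)(v^\lambda)^p=\dfrac{g\bigl(u(y^\lambda/|y^\lambda|^2)\bigr)}{|y^\lambda|^{2+\alpha}}$,
where $y^\lambda/|y^\lambda|^2$ sweeps out the entire half-space (so $u$ at that point is not bounded below in general and $k(b)$ can blow up) and $|y^\lambda|=|y-p^\lambda|$ can be arbitrarily small, making the $|y^\lambda|^{-(2+\alpha)}$ factor non-integrable near $p^\lambda$ since $2+\alpha\geq N$. The claim that ``the $L^\infty$ bound of $v$ on $\Sigma_\lambda$'' majorises this factor is therefore not correct; it bounds $(v^\lambda)^p$ but not $k(b)$. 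Relatedly, your concern in the last paragraph about $k,h,H,K$ blowing up where $v$ is small is aimed at the wrong quantity once you adopt the paper's decomposition: the argument of $k$ there is $|y|^{N-2}v(y)=u(y/|y|^2)$, controlled by $u$ on a compact set, not by the smallness of $v$; the real danger is precisely the $k(b)$ you have retained.

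\emph{The estimate for the nonlocal pieces.} After substituting Lemma~\ref{t 2.1}, the $(w-w^\lambda)$ and $(z-z^\lambda)$ contributions become double integrals of the form
\begin{equation*}
\int_{\Sigma_\lambda^v}\int_{\partial\Sigma_\lambda^v}\frac{F_1(\bar x)\,F_2(y)}{|(\bar x,0)-y|^{N-\alpha}}\,d\bar x\,dy ,
\end{equation*}
with one variable on the boundary and one in the interior. Plain H\"older cannot absorb the kernel here; the step the paper invokes is the Hardy--Littlewood--Sobolev inequality on the upper half-space (Dou--Zhu, \cite{DZ}), which produces the $L^{\frac{2(N-1)}{N-2+\alpha}}(\partial\Sigma_\lambda^v)\times L^{\frac{2N}{N+\alpha}}(\Sigma_\lambda^v)$ pairing, after which a further H\"older factorisation yields the norms in \eqref{2.12}. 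Your write-up conflates HLS and H\"older into a single step, but the HLS step is the only genuinely nonlocal estimate in the lemma and must be named and applied with the correct exponents.

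With those two corrections---adopt the paper's algebraic splitting so that $k,h$ are always evaluated at $|y|^{N-2}v(y)$ (resp.\ $|x'|^{N-2}v(x',0)$), and use Dou--Zhu HLS for the kernel pieces---your argument matches the paper's and closes correctly.
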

\begin{proof}
First, we note that since $\lambda>0$, then there exists $r>0$ such that
$\Sigma_\lambda\subset  \mathbb R_+^N\setminus B_r^+(0)$, where $B_r^+(0)=\{x\in \mathbb R_+^N||x|<r,\ x_N>0\}$. Hence, we deduce from the definition of $v(x)$ and the decay behavior of $v(x)$ at infinity that
$$
v,(v-v^\lambda)^+\in L^{2^*}(\Sigma_\lambda)\cap
L^\infty(\Sigma_\lambda).
$$

Second, in order to deal with the possible singularity of $v(x),w(x)$ and $z(x')$ at zero, we need to introduce some cut-off function. Let $\eta=\eta_\varepsilon\in C( \mathbb R_+^N, [0,1])$ be a radial function such that
 $$
\eta(x)= \left\{
\begin{array}{ll}
1, & \quad  2\varepsilon\leq |x-p^\lambda|\leq \frac 1\varepsilon,\\
 \\0, & \quad \ |x-p^\lambda|<\varepsilon,\ |x-p^\lambda|> \frac 2\varepsilon.
\end{array}
\right.
$$
Moreover, we require that $|\nabla \eta|\leq \frac 2\varepsilon$ for $\varepsilon<|x-p^\lambda|<2\varepsilon$ and $|\nabla \eta|\leq  2\varepsilon$ for $\frac 1\varepsilon<|x-p^\lambda|<\frac
2\varepsilon$. It is easy to see that such cut-off function exists. For convenience, we also define $\varphi=\varphi_\varepsilon=\eta^2_\varepsilon(v-v^\lambda)^+$ and
$\psi=\psi_\varepsilon=\eta_\varepsilon(v-v^\lambda)^+$, then it is easy to see that $\varphi$ and $\psi$ is well-defined on $\Sigma_\lambda$. Moreover, a simple calculation shows that
$$
|\nabla \psi|^2=\nabla(v-v^\lambda)^+\nabla
\varphi+[(v-v^\lambda)^+]^2|\nabla \eta|^2.
$$
With the above preparations, we have
\begin{equation}\label{2.13}
\begin{split}
&\int_{\Sigma_\lambda\cap \{2\varepsilon\leq |x-p^\lambda|\leq \frac 1\varepsilon\}}|\nabla(v-v^\lambda)^+|^2\,dx\\&\leq \int_{\Sigma_\lambda}|\nabla \psi(x)|^2\,dx
\\&\leq \int_{\Sigma_\lambda}\nabla(v-v^\lambda)\nabla \varphi\,dx+\int_{\Sigma_\lambda}[(v-v^\lambda)^+]^2|\nabla\eta_\varepsilon|^2\,dx\\&=
\int_{\Sigma_\lambda}-\Delta(v-v^\lambda)\varphi(y)\,dy+\int_{\partial \Sigma_\lambda}\frac{\partial (v-v^\lambda)}{\partial \nu}\varphi(x',0)\,dx'+I_\varepsilon
\\&= \int_{\Sigma_\lambda}[w(y)k(|y|^{N-2}v(y))v(y)^{\frac{2+\alpha}{N-2}}-
w(y^\lambda)k(|y^\lambda|^{N-2}v(y^\lambda))v(y^\lambda)^{\frac{2+\alpha}{N-2}}][v(y)-v(y^\lambda)]^+\eta_{\varepsilon}^2\,dy\\&+\int_{\partial \Sigma_\lambda} [z(x')h(|x'|^{N-2}v(x',0))v(x',0)^{\frac{\alpha}{N-2}}-z(x'^\lambda)h(|x'^\lambda|^{N-2}v(x'^\lambda,0))v(x'^\lambda,0)^{\frac {\alpha}{N-2}}]\\&\cdot [v(x',0)-v(x'^\lambda,0)]^+\eta_{\varepsilon}^2\,dx'
+I_\varepsilon\\&=\int_{\Sigma_\lambda^v}[w(y)k(|y|^{N-2}v(y))v(y)^{\frac{2+\alpha}{N-2}}-
w(y^\lambda)k(|y^\lambda|^{N-2}v(y^\lambda))v(y^\lambda)^{\frac{2+\alpha}{N-2}}][v(y)-v(y^\lambda)]^+\eta_{\varepsilon}^2\,dy\\&+\int_{\partial \Sigma_\lambda^v} [z(x')h(|x'|^{N-2}v(x',0))v(x',0)^{\frac \alpha{N-2}}-z(x'^\lambda)h(|x'^\lambda|^{N-2}v(x'^\lambda,0))v(x'^\lambda,0)^{\frac \alpha{N-2}}]\\&\cdot[v(x',0)-v(x'^\lambda,0)]^+\eta_{\varepsilon}^2\,dx'
+I_\varepsilon\\&=I+II+I_\varepsilon,
\end{split}
\end{equation}
where $I_\varepsilon=\int_{\Sigma_\lambda}[(v(y)-v(y^\lambda)^+)]^2|\nabla\eta_\varepsilon|^2\,dy$.

To estimate the integral $I$, we first note that
$$
k(|y|^{N-2}v(y))\leq k(|y^\lambda|^{N-2}v(y^\lambda))
$$
for $y\in \Sigma_\lambda^v$ by the monotonicity assumption of $k$. Now we divide the integral domain into two parts, the first part is $D_1=\{y|v(y)>v(y^\lambda),w(y)>w(y^\lambda)\}$ and the second is $D_2=\{y|v(y)>v(y^\lambda),w(y)\leq w(y^\lambda)\}$.

For $y\in D_1$, we have
\begin{equation}\label{2.14}
\begin{split}
&w(y)k(|y|^{N-2}v(y))v(y)^{\frac{2+\alpha}{N-2}}-
w(y^\lambda)k(|y^\lambda|^{N-2}v(y^\lambda))v(y^\lambda)^{\frac{2+\alpha}{N-2}}\\&=[w(y)-w(y^\lambda)]k(|y|^{N-2}v(y))v(y)^{\frac{2+\alpha}{N-2}}
\\&+w(y^\lambda)[k(|y|^{N-2}v(y))v(y)^{\frac{2+\alpha}{N-2}}-k(|y^\lambda|^{N-2}v(y^\lambda))v(y^\lambda)^{\frac{2+\alpha}{N-2}}]\\&\leq [w(y)-w(y^\lambda)]k(|y|^{N-2}v(y))v(y)^{\frac{2+\alpha}{N-2}}
\\&+w(y)k(|y|^{N-2}v(y))[v(y)^{\frac{2+\alpha}{N-2}}-v(y^\lambda)^{\frac{2+\alpha}{N-2}}].
\end{split}
\end{equation}
While for $y\in D_2$, we have
\begin{equation}\label{2.15}
\begin{split}
&w(y)k(|y|^{N-2}v(y))v(y)^{\frac{2+\alpha}{N-2}}-
w(y^\lambda)k(|y^\lambda|^{N-2}v(y^\lambda))v(y^\lambda)^{\frac{2+\alpha}{N-2}}\\&\leq w(y)[k(|y|^{N-2}v(y))v(y)^{\frac{2+\alpha}{N-2}}-k(|y^\lambda|^{N-2}v(y^\lambda))v(y^\lambda)^{\frac{2+\alpha}{N-2}}]\\&
\leq
w(y)k(|y|^{N-2}v(y))[v(y)^{\frac {2+\alpha}{N-2}}-v(y^\lambda)^{\frac {2+\alpha}{N-2}}].
\end{split}
\end{equation}

Similarly, in order to estimate the integral $II$, we note that
$$
h(|x'|^{N-2}v(x',0))\leq h(|x'^\lambda|^{N-2}v(x'^\lambda,0))
$$
for $x'\in \partial\Sigma_\lambda^v$ by the monotonicity assumption of $h$.
As before, we still divide the integral domain $\partial \Sigma_\lambda^v$ into $D_3\cup D_4$, where $D_3=\{x'\in \partial \Sigma_\lambda|v(x',0)>v(x'^\lambda,0),z(x')>z(x'^\lambda)\}$ and $D_4=\{x'\in \partial \Sigma_\lambda|v(x',0)>v(x'^\lambda,0),z(x')\leq z(x'^\lambda)\}$. We can infer as the above procedure that for $x'\in D_3$
\begin{equation}\label{2.16}
\begin{split}
&z(x')h(|x'|^{N-2}v(x',0))v(x',0)^{\frac{\alpha}{N-2}}-
z(x'^\lambda)h(|x'^\lambda|^{N-2}v(x'^\lambda,0))v(x'^\lambda,0)^{\frac{\alpha}{N-2}}\\&\leq h(|x'|^{N-2}v(x',0))\{z(x')[v(x',0)^{\frac{\alpha}{N-2}}-v(x'^\lambda,0)^{\frac \alpha{N-2}}]+v(x',0)^{\frac \alpha{N-2}}[z(x')-z(x'^\lambda)]\}.
\end{split}
\end{equation}
While for $x'\in D_4$, the following inequality holds
\begin{equation}\label{2.17}
\begin{split}
&z(x')h(|x'|^{N-2}v(x',0))v(x',0)^{\frac{\alpha}{N-2}}-
z(x'^\lambda)h(|x'^\lambda|^{N-2}v(x'^\lambda,0))v(x'^\lambda,0)^{\frac{\alpha}{N-2}}\\&\leq
z(x')h(|x'|^{N-2}v(x',0))[v(x',0)^{\frac{\alpha}{N-2}}-v(x'^\lambda,0)^{\frac{\alpha}{N-2}}].
\end{split}
\end{equation}

Hence, we deduce from equations \eqref{2.13} \eqref{2.14} \eqref{2.15} \eqref{2.16} and \eqref{2.17} that
\begin{equation}\label{2.18}
\begin{split}
&\int_{\Sigma_\lambda\cap \{2\varepsilon\leq |x-p^\lambda|\leq \frac 1\varepsilon\}}|\nabla(v-v^\lambda)^+|^2\,dx\\&\leq
I_\varepsilon+C\int_{\Sigma_\lambda^v}w(y)k(|y|^{N-2}v(y))v(y)^{\frac{4+\alpha-N}{N-2}}[(v(y)-v(y^\lambda))^+]^2\eta_\ep(y)^2\,dy\\&+\int_{\Sigma_\lambda^v}
k(|y|^{N-2}v(y))v(y)^{\frac{2+\alpha}{N-2}}[w(y)-w(y^\lambda)]^+[v(y)-v(y^\lambda)]^+\eta_\ep(y)^2\,dy\\&+
C\int_{\partial \Sigma_\lambda^v}h(|x'|^{N-2}v(x',0))z(x')v(x',0)^{\frac{2+\alpha-N}{N-2}}[(v(x',0)-v(x'^\lambda,0))^+]^2\eta_\ep(x',0)^2\,dx'\\&
+\int_{\partial\Sigma_\lambda^v}h(|x'|^{N-2}v(x',0))v(x',0)^{\frac \alpha {N-2}}[z(x')-z(x'^\lambda)]^+[v(x',0)-v(x'^\lambda,0)]^+\eta_\ep(x',0)^2\,dx'\\&=
I_\ep+A+B+C+D.
\end{split}
\end{equation}

We claim that $I_\ep\to 0$ as $\ep\to 0$. In fact, if we denote
$D_\varepsilon^1=\{x\in \Sigma_\lambda|\varepsilon<|x-p^\lambda|<2\varepsilon\}$ and
$D_\varepsilon^2=\{x\in \Sigma_\lambda|\frac 1\varepsilon<|x-p^\lambda|<\frac
2\varepsilon\}$, then we have
$$
\int_{D_\varepsilon^1}|\nabla \eta|^N\,dx\leq C\frac 1{\varepsilon^N}\cdot \varepsilon^N=C.
$$
Similarly, we have
$$
\int_{D_\varepsilon^2}|\nabla \eta|^N\,dx\leq C\varepsilon^N\cdot \frac 1{\varepsilon^N}=C.
$$
Hence, we deduce from H\"older inequality and the fact that $[v(y)-v(y^\lambda)]^+\in L^{2^*}(\Sigma_\lambda)$ that
$$
I_\varepsilon\leq (\int_{D_\varepsilon^1\cup
D_\varepsilon^2}[(v-v^\lambda)^+]^{2^*}\,dx)^{\frac 2{2^*}}\cdot
(\int_{\Sigma_\lambda}|\nabla \eta|^N\,dx)^{\frac 2N} \to 0
$$
as $\varepsilon \to 0$.

Next, we estimate the integrals $A,B,C,D$. As for $A$, we infer from the monotonicity of $k$ and the decay of $v$ that there exists $C_\lambda>0$, which is non-increasing in $\lambda$, such that
\begin{equation}\label{2.19}
\begin{split}
A&\leq C_\lambda\int_{\Sigma_\lambda^v}w(y)v(y)^{\frac{4+\alpha-N}{N-2}}[(v(y)-v(y^\lambda))^+]^2\,dy\\&\leq C_\lambda (\int_{\Sigma_\lambda^v}w(y)^{\frac{2N}{N-\alpha}}\,dy)^{\frac{N-\alpha}{2N}}\cdot (\int_{\Sigma_\lambda^v}v(y)^{\frac{2N}{N-2}}\,dy)^{\frac{4+\alpha-N}{2N}}(\int_{\Sigma_\lambda^v}[(v(y)-v(y^\lambda))^+]^{\frac{2N}{N-2}})^{\frac{N-2}{N}}.
\end{split}
\end{equation}

Similarly, for  the integral $B$, we infer from Lemma \ref{t 2.1}, the Hardy-Littlewood-Sobolev inequality in \cite{DZ} and H\"older inequality that
\begin{equation}\label{2.20}
\begin{split}
B&\leq C_\lambda\int_{\Sigma_\lambda^v}v(y)^{\frac{2+\alpha}{N-2}}[w(y)-w(y^\lambda)]^+[v(y)-v(y^\lambda)]^+\,dy\\&\leq
C_\lambda\int_{\Sigma_\lambda^v}\int_{\partial\Sigma_\lambda^v}\frac{1}{|(\bar x,0)-y|^{N-\alpha}}v(\bar x,0)^{\frac \alpha{N-2}}[v(\bar x,0)-v(\bar x^\lambda,0)]^+\,d\bar xv(y)^{\frac{2+\alpha}{N-2}}[v(y)-v(y^\lambda)]^+\,dy\\&\leq C_\lambda\|v(\bar x,0)^{\frac{\alpha}{N-2}}(v(\bar x,0)-v(\bar x^\lambda,0))^+\|_{L^{\frac{2(N-1)}{N-2+\alpha}}(\partial \Sigma_\lambda^v)}\|v(y)^{\frac{2+\alpha}{N-2}}[v(y)-v(y^\lambda)]^+\|_{L^{\frac{2N}{N+\alpha}}(\Sigma_\lambda^v)}\\&\leq C_\lambda
\|v(\bar x,0)\|_{L^{\frac{2(N-1)}{N-2}}}^{\frac \alpha{N-2}}\|(v(\bar x,0)-v(\bar x^\lambda,0))^+\|_{L^{\frac{2(N-1)}{N-2}}(\partial \Sigma_\lambda^v)}\|v(y)\|_{L^{\frac{2N}{N-2}}(\Sigma_\lambda^v)}^{\frac{2+\alpha}{N-2}}\|(v(y)-v(y^\lambda))^+\|_{L^{\frac{2N}{N-2}}(\Sigma_\lambda^v)}.
\end{split}
\end{equation}

For the integral $C$, we infer from H\"older inequality that
\begin{equation}\label{2.21}
\begin{split}
C&\leq C_\lambda\int_{\partial \Sigma_\lambda^v}z(x')v(x',0)^{\frac{2+\alpha-N}{N-2}}[(v(x',0)-v(x'^\lambda,0))^+]^2\,dx'\\&\leq
C_\lambda (\int_{\partial \Sigma_\lambda^v} z(x')^{\frac{2(N-1)}{N-\alpha}}\,dx')^{\frac{N-\alpha}{2(N-1)}}(\int_{\partial \Sigma_\lambda^v}v(x',0)^{\frac{2(N-1)}{N-2}}\,dx')^{\frac{2+\alpha-N}{2(N-1)}}\\&\cdot(\int_{\partial \Sigma_\lambda^v} [(v(x',0)-v(x'^\lambda,0))^+]^{\frac{2(N-1)}{N-2}}\,dx')^{\frac{N-2}{N-1}}.
\end{split}
\end{equation}

Finally, for the integral $D$, we infer from Lemma \ref{t 2.1}, the Hardy-Littlewood-Sobolev inequality in half space in \cite{DZ} and H\"older inequality that
\begin{equation}\label{2.22}
\begin{split}
D&\leq C_\lambda\int_{\partial \Sigma_\lambda^v}v(x',0)^{\frac \alpha{N-2}}\int_{\Sigma_\lambda^v}\frac 1{|(x',0)-y|^{N-\alpha}}v(y)^{\frac{2+\alpha}{N-2}}[v(y)-v(y^\lambda)]^+\,dy[v(x',0)-v(x'^\lambda,0)]^+\,dx'\\&\leq C_\lambda \|v(x',0)^{\frac{\alpha}{N-2}}[v(x',0)-v(x'^\lambda,0)]^+\|_{L^{\frac{2(N-1)}{N+\alpha-2}}(\partial \Sigma_\lambda^v)}\|v(y)^{\frac{2+\alpha}{N-2}}[v(y)-v(y^\lambda)]^+\|_{L^{\frac{2N}{N+\alpha}}(\Sigma_\lambda^v)}\\&\leq C_\lambda\|v(x',0)\|_{L^{\frac{2(N-1)}{N-2}}(\partial \Sigma_\lambda^v)}^{\frac \alpha{N-2}}\|(v(x',0)-v(x'^\lambda,0))^+\|_{L^{\frac{2(N-1)}{N-2}}(\partial \Sigma_\lambda^v)}\\&\cdot\|v(y)\|_{L^{\frac{2N}{N-2}}(\Sigma_\lambda^v)}^{\frac{2+\alpha}{N-2}}\|(v(y)-v(y^\lambda))^+\|_{L^{\frac{2N}{N-2}}(\Sigma_\lambda^v)}.
\end{split}
\end{equation}

At last, let $\ep\to 0$ in equation \eqref{2.18} and use equations \eqref{2.19}\eqref{2.20}\eqref{2.21}\eqref{2.22}, the dominated convergence theorem, Sobolev inequality and Sobolev trace inequality, then we get
\begin{equation}\label{2.23}
\begin{split}
&\int_{\Sigma_\lambda}|\nabla (v-v^\lambda)^+|^2\,dx\\&\leq C_\lambda [\|w(y)\|_{L^{\frac{2N}{N-\alpha}}(\Sigma_\lambda^v)}\|v(y)\|^{\frac{4+\alpha-N}{N-2}}_{L^{\frac{2N}{N-2}}(\Sigma_\lambda^v)}+\|v(\bar x,0)\|_{L^{\frac{2(N-1)}{N-2}}(\partial \Sigma_\lambda^v)}^{\frac \alpha{N-2}}\|v(y)\|_{L^{\frac{2N}{N-2}}(\Sigma_\lambda^v)}^{\frac{2+\alpha}{N-2}}
\\&+\|z(x')\|_{L^{\frac{2(N-1)}{N-\alpha}}(\partial\Sigma_\lambda^v)}\|v(x',0)\|_{L^{\frac{2(N-1)}{N-2}}(\partial\Sigma_\lambda^v)}^{\frac{2+\alpha-N}{N-2}}
]\cdot\int_{\Sigma_\lambda}|\nabla (v-v^\lambda)^+|^2\,dx.
\end{split}
\end{equation}
This completes the proof of this lemma.

\end{proof}

To end this section, we want to make some comments on this lemma. As we have notified in the introduction, because the nonlinear term and the boundary value condition of the equation are not assumed to be Lipschitz continuous, the solution is not of $C^2$ class in general, so we can't use the usual moving plane method based on the maximum principle in a differential form. Thanks to Lemma \ref{t 2.2}, it can play the same role as the maximum principle does. In fact, if we can prove that
\begin{equation}\label{2.24}
\begin{split}
&C_\lambda [\|w(y)\|_{L^{\frac{2N}{N-\alpha}}(\Sigma_\lambda^v)}\|v(y)\|^{\frac{4+\alpha-N}{N-2}}_{L^{\frac{2N}{N-2}}(\Sigma_\lambda^v)}+\|v(\bar x,0)\|_{L^{\frac{2(N-1)}{N-2}}(\partial \Sigma_\lambda^v)}^{\frac \alpha{N-2}}\|v(y)\|_{L^{\frac{2N}{N-2}}(\Sigma_\lambda^v)}^{\frac{2+\alpha}{N-2}}
\\&+\|z(x')\|_{L^{\frac{2(N-1)}{N-\alpha}}(\partial\Sigma_\lambda^v)}\|v(x',0)\|_{L^{\frac{2(N-1)}{N-2}}(\partial\Sigma_\lambda^v)}^{\frac{2+\alpha-N}{N-2}}
]<1,
\end{split}
\end{equation}
then we infer from equation \eqref{2.12} that $\int_{\Sigma_\lambda}|\nabla (v-v^\lambda)^+|^2\,dx=0$, or $v(y)\leq v(y^\lambda)$ in $\Sigma_\lambda$, the same conclusion as the maximum principles imply.

\section{\bf Proof of Theorem \ref{t 1.1}}

With the above preparations, we can prove Theorem \ref{1.1} now. We use the moving plane method based on integral inequality to prove our result. First, we will show that this procedure can be started from some place. More precisely, we have the following
\begin{lemma}\label{3.1}
Under the assumptions of Theorem \ref{t 1.1}, there exists
$\lambda_0>0$, such that for all $\lambda\geq \lambda_0$, we have
$w(y)\leq w(y^\lambda), v(y)\leq v(y^\lambda)$ and $z(x')\leq z(x'^\lambda)$ in $\Sigma_\lambda$ and $\partial \Sigma_\lambda$ respectively.
\end{lemma}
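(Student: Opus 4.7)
The plan is to invoke the integral inequality of Lemma \ref{t 2.2} and to push $\lambda$ large enough that the coefficient on its right hand side drops below $1$, so that $\int_{\Sigma_\lambda}|\nabla(v-v^\lambda)^+|^2\,dx$ is forced to vanish. Since $C_\lambda$ is nonincreasing in $\lambda$, once we fix an arbitrary $\lambda_1>0$ we have a uniform bound $C_\lambda \leq C_{\lambda_1}$ for all $\lambda\geq \lambda_1$. The remaining question is then whether each of the four factors
\[
\|w\|_{L^{\frac{2N}{N-\alpha}}(\Sigma_\lambda^v)},\quad
\|v\|_{L^{\frac{2N}{N-2}}(\Sigma_\lambda^v)},\quad
\|v(\cdot,0)\|_{L^{\frac{2(N-1)}{N-2}}(\partial\Sigma_\lambda^v)},\quad
\|z\|_{L^{\frac{2(N-1)}{N-\alpha}}(\partial\Sigma_\lambda^v)}
\]
tends to $0$ as $\lambda\to\infty$.

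This is where the decay estimates \eqref{2.4} play their key role. Since $v(x)\leq C|x-x_p|^{-(N-2)}$, $w(x)\leq C|x-x_p|^{-(N-\alpha)}$ and $z(x')\leq C|x'-x_p'|^{-(N-\alpha)}$ for $|x-x_p|\geq 1$, a direct computation shows that each of the quantities above is dominated by an integral of the form $\int_{|x|\geq\lambda} |x|^{-2N}\,dx$ or $\int_{|x'|\geq \lambda}|x'|^{-2(N-1)}\,dx'$, both of which are convergent and tend to zero as $\lambda\to\infty$. The possible singularity at $x_p=0$ is harmless because $\Sigma_\lambda\subset \mathbb R_+^N\setminus B_\lambda^+(0)$ for $\lambda>0$, so it never enters the domain of integration.

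Combining these two ingredients, I would choose $\lambda_0\geq \lambda_1$ large enough so that the bracketed factor in \eqref{2.12} is less than, say, $1/2$. Then Lemma \ref{t 2.2} yields $\int_{\Sigma_\lambda}|\nabla(v-v^\lambda)^+|^2\,dx\leq \tfrac12\int_{\Sigma_\lambda}|\nabla(v-v^\lambda)^+|^2\,dx$, which forces the integral (known to be finite by the $L^{2^*}$ membership noted in Lemma \ref{t 2.2}) to be zero. Since $(v-v^\lambda)^+$ vanishes on the hyperplane $T_\lambda$, a Sobolev--Poincar\'e argument gives $(v-v^\lambda)^+\equiv 0$, i.e.\ $v\leq v^\lambda$ in $\Sigma_\lambda$, hence also $v(\cdot,0)\leq v^\lambda(\cdot,0)$ on $\partial\Sigma_\lambda$. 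In particular the sets $\Sigma_\lambda^v$ and $\partial\Sigma_\lambda^v$ are empty. Feeding this back into Lemma \ref{t 2.1} makes the right hand sides of \eqref{2.6}--\eqref{2.7} vanish, giving at once $w(y)\leq w(y^\lambda)$ on $\Sigma_\lambda$ and $z(x')\leq z(x'^\lambda)$ on $\partial\Sigma_\lambda$.

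The main obstacle I anticipate is purely bookkeeping: one has to check carefully that the exponents appearing in Lemma \ref{t 2.2} are indeed the ones for which the decay rates of $v$, $w$, $z$ from \eqref{2.4} produce integrable tails, and that the uniform constant $C_\lambda$ (which absorbs both the non-increasing monotonicity constants from $k$, $h$ and the Sobolev/trace constants) stays finite as $\lambda$ ranges over $[\lambda_1,\infty)$. Both points are standard once the bookkeeping is done, so no genuinely new analytic input is required beyond Lemmas \ref{t 2.1} and \ref{t 2.2}.
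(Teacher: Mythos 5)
Your proposal is correct and follows essentially the same route as the paper: use the decay bounds \eqref{2.4} to make the bracketed norms in \eqref{2.12} small (using that $C_\lambda$ is bounded for large $\lambda$), deduce $(v-v^\lambda)^+\equiv 0$ in $\Sigma_\lambda$, and then feed $\Sigma_\lambda^v=\emptyset$ back into Lemma~\ref{t 2.1} to obtain the corresponding inequalities for $w$ and $z$. The only difference is that you spell out the exponent bookkeeping (each norm is controlled by a tail of $\int|x|^{-2N}$ or $\int|x'|^{-2(N-1)}$), which the paper leaves implicit.
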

\begin{proof}
The results of this lemma is a direct consequence of Lemma \ref{t 2.1} and Lemma \ref{t 2.2}. In fact, by the decay of $w(y), v(y)$ and $z(x')$, see equation \eqref{2.4}, we can find $\lambda_0>0$ large enough, such that for all $\lambda>\lambda_0$, we have
\begin{equation}\label{}
\begin{split}
&C_\lambda [\|w(y)\|_{L^{\frac{2N}{N-\alpha}}(\Sigma_\lambda^v)}\|v(y)\|^{\frac{4+\alpha-N}{N-2}}_{L^{\frac{2N}{N-2}}(\Sigma_\lambda^v)}+\|v(\bar x,0)\|_{L^{\frac{2(N-1)}{N-2}}(\partial \Sigma_\lambda^v)}^{\frac \alpha{N-2}}\|v(y)\|_{L^{\frac{2N}{N-2}}(\Sigma_\lambda^v)}^{\frac{2+\alpha}{N-2}}
\\&+\|z(x')\|_{L^{\frac{2(N-1)}{N-\alpha}}(\partial\Sigma_\lambda^v)}\|v(x',0)\|_{L^{\frac{2(N-1)}{N-2}}(\partial\Sigma_\lambda^v)}^{\frac{2+\alpha-N}{N-2}}
]<\frac 12,
\end{split}
\end{equation}
so we can conclude from Lemma \ref{t 2.2} that $v(y)\leq v(y^\lambda)$ in $\Sigma_\lambda$. Substitute the result into Lemma \ref{t 2.1}, then we get $w(y)\leq w(y^\lambda)$ and $z(x')\leq z(x'^\lambda)$ in $\Sigma_\lambda$ and $\partial \Sigma_\lambda$ respectively. The result of this lemma follows.
\end{proof}

Based on the above result, we can move the plane $T_{\lambda_0}$ from the right to the left as long as $w(y)\leq w(y^\lambda), v(y)\leq v(y^\lambda)$ and $z(x')\leq z(x'^\lambda)$ in $\Sigma_\lambda$ and $\partial \Sigma_\lambda$ respectively. We suppose the procedure stops at some $\lambda_1$. More precisely, we define
$$
\lambda_1= \inf\{\lambda|w(y)\leq w(y^{\bar\lambda}),\ v(y)\leq v(y^{\bar\lambda})\ {\rm in}\ \Sigma_{\bar\lambda},\ z(x')\leq z(x'^{\bar\lambda})\ {\rm in}\ \partial\Sigma_{\bar\lambda}\ {\rm for\ all\ }\bar\lambda\geq \lambda\},
$$
then we have the following
\begin{lemma}\label{t 3.2}
If $\lambda_1>0$, then we have $w(y)\equiv w(y^{\lambda_1}),\ v(y)\equiv v(y^{\lambda_1})$ in $\Sigma_{\lambda_1}$ and $z(x')\equiv z(x'^{\lambda_1})$ in $\partial \Sigma_{\lambda_1}$.
\end{lemma}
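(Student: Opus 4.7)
The plan is to argue by contradiction with the minimality of $\lambda_1$. By the definition of $\lambda_1$ together with the continuity of $v$, $w$, $z$ in $\lambda$, the non-strict inequalities $v \leq v^{\lambda_1}$ in $\Sigma_{\lambda_1}$, $w \leq w^{\lambda_1}$ in $\Sigma_{\lambda_1}$, and $z \leq z^{\lambda_1}$ on $\partial \Sigma_{\lambda_1}$ already hold. I will first establish $v \equiv v^{\lambda_1}$ by ruling out strict inequality somewhere, and then deduce the identities for $w$ and $z$ from the coupling in \eqref{2.5}.

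Suppose for contradiction that $v \not\equiv v^{\lambda_1}$ in $\Sigma_{\lambda_1}$. Setting $u^* := v^{\lambda_1} - v \geq 0$, I would upgrade $u^* \geq 0$ to $u^* > 0$ strictly in the interior of $\Sigma_{\lambda_1}$ and on $\partial \Sigma_{\lambda_1}$. Using the third equation of \eqref{2.5}, together with $w \leq w^{\lambda_1}$, the non-increasing property of $k$, and $|y^{\lambda_1}| < |y|$ for $y \in \Sigma_{\lambda_1}$, one finds that $u^*$ is a nonnegative $W^{1,2}_{\mathrm{loc}}$ weak supersolution of $-\Delta u^* \geq 0$ on $\Sigma_{\lambda_1}$. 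A weak-solution strong maximum principle then gives $u^* > 0$ strictly in the interior on each connected component where $u^* \not\equiv 0$; an analogous Hopf-type argument on $\partial \Sigma_{\lambda_1}$, using the boundary condition from \eqref{2.5}, the monotonicity of $h$, and $z \leq z^{\lambda_1}$, provides strict inequality at boundary points.

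With strict inequality in hand, I would exploit the absolute continuity of the $L^p$ integrals in Lemma \ref{t 2.2} to push the plane past $\lambda_1$. Given $\varepsilon > 0$, select a compact set $K \subset \overline{\Sigma_{\lambda_1}}$ bounded away from $T_{\lambda_1}$ and $p^{\lambda_1}$, such that the $L^p$ norms of $v, w$ over $\Sigma_{\lambda_1} \setminus K$ and of $v(\cdot,0), z$ over $\partial \Sigma_{\lambda_1} \setminus K$ appearing in \eqref{2.12} are all bounded by $\varepsilon$. On $K$, strict inequality plus compactness yield $v^{\lambda_1} - v \geq \delta > 0$; by continuity in $\lambda$, $v \leq v^\lambda$ on $K$ for $\lambda$ slightly less than $\lambda_1$. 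Consequently $\Sigma_\lambda^v \cup \partial \Sigma_\lambda^v$ is confined to a set where the relevant $L^p$-contributions of $v, w, z$ are $O(\varepsilon)$, so the bracketed coefficient in \eqref{2.12} becomes strictly less than $1$ and Lemma \ref{t 2.2} yields $v \leq v^\lambda$ in $\Sigma_\lambda$. Lemma \ref{t 2.1} then propagates this to $w \leq w^\lambda$ and $z \leq z^\lambda$, contradicting the infimum definition of $\lambda_1$. Hence $v \equiv v^{\lambda_1}$. Substituting this back into the PDE and boundary condition of \eqref{2.5} forces the identities $w(y)\,k(|y|^{N-2}v(y)) = w(y^{\lambda_1})\,k(|y^{\lambda_1}|^{N-2}v(y))$ and $z(x')\,h(|x'|^{N-2}v(x',0)) = z(x'^{\lambda_1})\,h(|x'^{\lambda_1}|^{N-2}v(x',0))$, and combining these with $w \leq w^{\lambda_1}$, $z \leq z^{\lambda_1}$, and the non-increasing behavior of $k, h$ applied at the arguments $|y|^{N-2}v(y)$ vs.\ $|y^{\lambda_1}|^{N-2}v(y)$ readily yields $w \equiv w^{\lambda_1}$ and $z \equiv z^{\lambda_1}$.

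The principal technical obstacle is the strict-inequality step: because $v$ is only a $W^{1,2}_{\mathrm{loc}}$ weak solution, the classical strong maximum principle and Hopf lemma are not directly applicable, and weak-solution analogues must be carefully invoked after verifying that $-\Delta u^* \geq 0$ holds in the weak sense on the entire set $\{u^* \geq 0\}$, despite the nonlocal and nonlinear couplings present in \eqref{2.5}. A secondary delicate point is ensuring, during the absolute-continuity argument, that the compact set $K$ can be chosen uniformly so that all four $L^p$ norms appearing in \eqref{2.12} are simultaneously controlled by $\varepsilon$.
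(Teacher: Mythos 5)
Your proof follows essentially the same route as the paper's: assume by contradiction that equality fails, use the monotonicity structure of the Kelvin-transformed equation to derive $-\Delta(v^{\lambda_1}-v)\geq 0$, upgrade to strict inequality via a (weak-form) strong maximum principle, and then exploit the decay of $v,w,z$ together with a dominated-convergence / absolute-continuity argument to make the bracketed coefficient in \eqref{2.12} drop below $1$ for $\lambda$ slightly less than $\lambda_1$, contradicting the minimality of $\lambda_1$ via Lemmas \ref{t 2.2} and \ref{t 2.1}. The only differences are organizational and in emphasis: you isolate $v$ first and then back out $w,z$ by substitution, whereas the paper treats all three simultaneously; and you are more explicit than the paper about the need for a weak-solution version of the strong maximum principle and a Hopf-type boundary argument, which is a valid and helpful refinement rather than a departure.
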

\begin{proof}
We prove the conclusion of this this lemma by contradiction. Suppose on the contrary that $w(y)\not\equiv w(y^{\lambda_1})$ or $v(y)\not\equiv v(y^{\lambda_1})$ or $z(x')\not\equiv z(x'^{\lambda_1})$, then we claim that the plane $T_{\lambda_1}$ can be moved to the left a little, that is, there exists $\delta_0>0$, such that $w(y)\leq w(y^\lambda),\ v(y)\leq v(y^\lambda)$ in $\Sigma_\lambda$ and $z(x')\leq z(x'^\lambda)$ in $\partial \Sigma_\lambda$ for all $\lambda\in [\lambda_1-\delta_0,\lambda_1]$. If the claim is true, then it contradicts the definition of $\lambda_1$, hence the result of this lemma holds. So in the following, we only need to prove the claim.

First, we have
$w(y)\leq w(y^{\lambda_1} ),\ v(y)\leq v(y^{\lambda_1})$ and $z(x')\leq z(x'^{\lambda_1})$ by the continuity of $w,v,z$ on $\lambda$. Moreover, by the monotonicity assumptions on $g$ and $k$, we have
\begin{eqnarray*}
&&w(y)k(|y|^{N-2}v(y))v(y)^{\frac{2+\alpha}{N-2}}\\&=& w(y) \frac{g(|y|^{N-2}v(y))}{|y|^{2+\alpha}}\\&\leq & w(y^{\lambda_1}) \frac{g(|y|^{N-2}v(y^{\lambda_1}))}{|y|^{2+\alpha}}\\&=& w(y^{\lambda_1}) \frac{g(|y|^{N-2}v(y^{\lambda_1}))}{[|y|^{N-2}v(y^{\lambda_1})]^{\frac{2+\alpha}{N-2}}}v(y^{\lambda_1})^{\frac{2+\alpha}{N-2}}\\&\leq &w(y^{\lambda_1}) \frac{g(|y^{\lambda_1}|^{N-2}v(y^{\lambda_1}))}{[|y^{\lambda_1}|^{N-2}v(y^{\lambda_1})]^{\frac{2+\alpha}{N-2}}}v(y^{\lambda_1})^{\frac{2+\alpha}{N-2}}
\\&=&
w(y^{\lambda_1})k(|y^{\lambda_1}|^{N-2}v(y^{\lambda_1}))v(y^{\lambda_1})^{\frac{2+\alpha}{N-2}},
\end{eqnarray*}
which further implies that
$$
-\Delta v(y)\leq -\Delta v(y^{\lambda_1})
$$
in $\Sigma_{\lambda_1}$.
Since $w(y)\not\equiv w(y^{\lambda_1})$ or $v(y)\not\equiv v(y^{\lambda_1})$ or $z(x')\not\equiv z(x'^{\lambda_1})$, then the strong maximum principle implies that
$v(y)< v(y^{\lambda_1})$ in $\Sigma_{\lambda_1}$. Further more, it implies $w(y)< w(y^{\lambda_1} )$ in $\Sigma_{\lambda_1}$ and $z(x')< z(x'^{\lambda_1})$ in $\partial\Sigma_{\lambda_1}$.

On the other hand, since $\frac
1{|x|^{2N}}\chi_{ \Sigma_\lambda^v}\to 0$, $\frac
1{|x'|^{2(N-1)}}\chi_{\partial\Sigma_\lambda^v}\to 0$ a.e. as $\lambda\to\lambda_1$ and there exists $\delta_1>0$ such that
$\frac
1{|x|^{2N}}\chi_{\Sigma_\lambda^v}\leq \frac
1{|x|^{2N}}\chi_{\Sigma_{\lambda_1-\delta_1}^w}$, $\frac
1{|x＆|^{2(N-1)}}\chi_{\partial\Sigma_\lambda^v}\leq \frac
1{|x'|^{2(N-1)}}\chi_{\partial\Sigma_{\lambda_1-\delta_1}^w}$ for $\lambda\in
[\lambda_1-\delta_1, \lambda_1]$, then it follows from the dominated convergence theorem that
$$
\int_{\Sigma_{\lambda}^v}\frac 1{|x|^{2N}}\,dx\to 0
$$
and
$$
\int_{\partial \Sigma_{\lambda}^v}\frac 1{|x'|^{2(N-1)}}\,dx'\to 0
$$
as $\lambda\to \lambda_1$.
Finally, by the decay law of $w(y),v(y)$ and $z(x)$, we conclude that there exists $\delta_0>0$, such that for all $\lambda\in[\lambda_1-\delta_0,\lambda_1]$, the following inequality holds
\begin{equation}\label{}
\begin{split}
&C_\lambda [\|w(y)\|_{L^{\frac{2N}{N-\alpha}}(\Sigma_\lambda^v)}\|v(y)\|^{\frac{4+\alpha-N}{N-2}}_{L^{\frac{2N}{N-2}}(\Sigma_\lambda^v)}+\|v(\bar x,0)\|_{L^{\frac{2(N-1)}{N-2}}(\partial \Sigma_\lambda^v)}^{\frac \alpha{N-2}}\|v(y)\|_{L^{\frac{2N}{N-2}}(\Sigma_\lambda^v)}^{\frac{2+\alpha}{N-2}}
\\&+\|z(x')\|_{L^{\frac{2(N-1)}{N-\alpha}}(\partial\Sigma_\lambda^v)}\|v(x',0)\|_{L^{\frac{2(N-1)}{N-2}}(\partial\Sigma_\lambda^v)}^{\frac{2+\alpha-N}{N-2}}
]<\frac 12.
\end{split}
\end{equation}
Hence, we infer from the above inequality and Lemma \ref{t 2.2} that $v(y)\leq v(y^\lambda)$ for all $\lambda\in[\lambda_1-\delta_0,\lambda_1]$. Finally, it follows from Lemma \ref{t 2.1} that $w(y)\leq w(y^\lambda)$ and $z(x')\leq z(x'^\lambda)$ for all $\lambda\in[\lambda_1-\delta_0,\lambda_1]$. This completes the proof of the claim.

\end{proof}

\begin{proposition}\label{t 3.3}
Suppose that $u,m,n$ is the positive solution of equation \eqref{2.1} and the nonlinear terms satisfy the assumptions in Theorem \ref{t 1.1}. For any $p\in \partial\mathbb R^{N}_+$, let $v,w,z$ be the Kelvin transformation of $u,m,n$ at $p$, then $v,w,z$ is symmetric with respect to any plane which is parallel to $x_N$ direction and passing through some point $q\in \partial \mathbb R^N_+$. Further more, if at least one of the functions $h,k,H,K$ is not a constant in $(0,\sup_{x\in \mathbb R_+^N}u(x))$ or $(0,\sup_{x\in \partial\mathbb R_+^N}u(x))$, then $u,m,n$ is symmetric with respect to any plane which is parallel to $x_N$ direction and passing through $p$, i.e., $p=q$.
\end{proposition}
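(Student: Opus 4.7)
The plan is to run the moving plane method of Lemmas \ref{t 2.2}--\ref{t 3.2} in every horizontal direction through $p$, and then to use the monotonicity hypotheses on $h,k,H,K$ to pin the symmetry center at $p$ itself. After a horizontal translation I may assume $p=0$, so $v,w,z$ are the Kelvin transforms \eqref{2.2}--\eqref{2.3} satisfying \eqref{2.5}. Since \eqref{2.5} depends on $y$ only through $|y|$, it is invariant under rotations fixing the $x_N$-axis, and it suffices to sweep planes $T_\lambda=\{x_1=\lambda\}$ and rotate afterward.

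First I would invoke Lemma \ref{3.1} to start the sweep at some large $\lambda_0$ and define
\begin{equation*}
\lambda_1 = \inf\{\lambda>0 : v(y)\le v(y^{\bar\lambda}),\ w(y)\le w(y^{\bar\lambda})\text{ in }\Sigma_{\bar\lambda},\ z(x')\le z(x'^{\bar\lambda})\text{ in }\partial\Sigma_{\bar\lambda}\ \forall\,\bar\lambda\ge\lambda\}.
\end{equation*}
If $\lambda_1>0$, Lemma \ref{t 3.2} gives $v\equiv v^{\lambda_1}$, $w\equiv w^{\lambda_1}$, $z\equiv z^{\lambda_1}$, and we set $q=(\lambda_1,0,\ldots,0)\in\partial\mathbb R_+^N$. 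If $\lambda_1=0$ I would run the mirror sweep from $\mu=-\infty$ upward on half-spaces $\{x_1<\mu\}$; the decay \eqref{2.4} and the inequality of Lemma \ref{t 2.2} are invariant under reflection, so the analogue starts and yields a critical $\mu_1\le 0$. If $\mu_1<0$, the mirror analogue of Lemma \ref{t 3.2} gives symmetry about $T_{\mu_1}$ with $q=(\mu_1,0,\ldots,0)$. If $\mu_1=0$, the two one-sided inequalities $v(y)\le v(y^0)$ on $\{y_1>0\}$ and on $\{y_1<0\}$, after evaluating the second at $y^0$ in place of $y$, combine to $v(y)=v(y^0)$, i.e., symmetry about $T_0$ through $p=q=0$. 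This proves the first claim for direction $e_1$; rotating about the $x_N$-axis delivers every horizontal direction.

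For the second claim I assume some one of $h,k,H,K$ is non-constant and argue by contradiction: if $q\ne p$ in the chosen direction, then either $\lambda_1>0$ or $\mu_1<0$, and the two subcases are symmetric, so assume $\lambda_1>0$. From $v(y)=v(y^{\lambda_1})$, $w(y)=w(y^{\lambda_1})$ and $\Delta v(y)=\Delta v(y^{\lambda_1})$ in the third line of \eqref{2.5} I deduce
\begin{equation*}
k(|y|^{N-2}v(y)) = k(|y^{\lambda_1}|^{N-2}v(y))\qquad \text{for all }y\in\Sigma_{\lambda_1}\setminus\{p^{\lambda_1}\}.
\end{equation*}
Since $\lambda_1>0$ the two arguments differ on an open set, and since $|y|^{N-2}v(y)=u(y/|y|^2)$ sweeps $(0,\sup_{x\in\mathbb R_+^N} u(x))$ as $y$ varies, $k$ must be constant there. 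The boundary equation of \eqref{2.5} handles $h$ identically on $(0,\sup_{x\in\partial\mathbb R_+^N}u(x))$. For $H$ and $K$ I would use $w(y)=w(y^{\lambda_1})$ and $z(x')=z(x'^{\lambda_1})$ together with the Riesz representations in \eqref{2.5}, invoking injectivity of the Riesz kernel on nonnegative densities (within the Hardy--Littlewood--Sobolev framework of \cite{DZ}) to transfer the equality of potentials to pointwise equality of the densities $H(|\bar x|^{N-2}v(\bar x,0))v(\bar x,0)^{(N-2+\alpha)/(N-2)}$ and $K(|y|^{N-2}v(y))v(y)^{(N+\alpha)/(N-2)}$ with their reflections; the same range argument then forces $H$ and $K$ constant. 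This contradicts hypothesis (iii).

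The step I expect to be most delicate is the left-sweep stage. The Kelvin transform $v$ has a non-integrable singularity at $p=0$, and once the left-sweeping region $\{x_1<\mu\}$ crosses zero the singularity sits inside. To keep Lemma \ref{t 2.2} applicable one must redo its cut-off argument around the reflected singular point and verify that the constant $C_\mu$ remains controlled uniformly down to $\mu\to 0^-$. A related subtlety is passing from equality of Riesz potentials to pointwise equality of their densities in the deduction of the constancy of $H$ and $K$; this depends on a half-space version of injectivity of the Riesz kernel on nonnegative densities in the relevant $L^p$ spaces, which is standard but deserves a careful statement in this nonlocal setting.
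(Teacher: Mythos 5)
Your proposal follows the same route as the paper: Kelvin transform at $p$, moving-plane sweep in a horizontal direction from both sides (Lemmas~\ref{3.1}--\ref{t 3.2}), a dichotomy at the critical positions $\lambda_1,\lambda_1'$, and in the case of a non-trivial critical position a deduction that $h,k,H,K$ are all constant, contradicting hypothesis~(iii). Two small remarks. First, the worry about the singularity entering the left sweep does not arise: as with the right sweep, which is restricted to $\lambda>0$, the left-hand procedure is carried out only for $\mu<0$, so by construction $\mu_1\le 0$ and the singular point (and its reflection) stay in the excised corner; no uniform control of $C_\mu$ as $\mu\to 0^-$ is needed beyond the monotonicity of $C_\lambda$ already in Lemma~\ref{t 2.2}. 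Second, for the constancy step the paper is terse; your Riesz-injectivity argument is workable, but it is lighter-handed to notice that once $v\equiv v^{\lambda_1}$, $w\equiv w^{\lambda_1}$, $z\equiv z^{\lambda_1}$, the exact identity \eqref{2.8} (and its analogue for $w$) integrates a quantity of fixed sign against a positive kernel difference and returns zero, forcing $K(|y|^{N-2}v(y))=K(|y^{\lambda_1}|^{N-2}v(y))$ and $H(|\bar x|^{N-2}v(\bar x,0))=H(|\bar x^{\lambda_1}|^{N-2}v(\bar x,0))$ a.e.; the interior and boundary equations then give $k$ and $h$ directly as you observe, so no half-space injectivity lemma is required.
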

\begin{proof}
To prove that $v,w,z$ is symmetric with respect to some point $q\in \partial\mathbb R_+^N$, we use the method of moving plane as above.
We prove the symmetry in every direction that is vertical to the
$x_N$ direction. Without loss of generality, we assume that $p=0$ and choose the $x_1$
direction and prove that $v,w,z$ is symmetric in the $x_1$ direction. We
can carry out the procedure as the above and assume that the plane stops at $\lambda_1$. If $\lambda_1>0$, then it
follows from Lemma \ref{t 3.2} that $v,w,z$ is symmetric with respect to $T_{\lambda_1}$. Otherwise, if
$\lambda_1\leq 0$, then we conclude by continuity that $v(x)\leq
v^{0}(x),w(x)\leq
w^{0}(x)$ and $z(x')\leq
z^{0}(x')$ for all $x\in \Sigma_0$ and $x'\in \partial \Sigma_0$. We can also perform the moving
plane procedure from the left and find a corresponding $\lambda_1'$.
If $\lambda_1'\geq 0$, then we get $v^{0}(x)\leq v(x),w^{0}(x)\leq w(x)$ and $z^{0}(x')\leq z(x')$ for $x\in
\Sigma_0$ and $x'\in \partial\Sigma_0$. This fact and the above inequality imply that $v(x),w(x),z(x')$ are
symmetric with respect to $T_0$. Otherwise, if $\lambda_1'<0$, an analogue to
Lemma \ref{t 3.2} shows that $v,w,z$ are symmetric with respect to
$T_{\lambda_1'}$.

Therefore, if $\lambda_1=\lambda_1'= 0$ for all
directions that is vertical to the $x_N$ direction and for all $p\in
\partial\mathbb R_+^N$, then $v,w,z$ and hence $u,m,n$ are symmetric with respect to any plane
which is parallel to the $x_N$ direction and passing through $p$.
Since $p$ is arbitrary, we conclude that $u,m,n$ depends only on $x_N$.

On the other hand, if $\lambda_1>0$ or $\lambda_1'<0$ in some
direction for some $p\in  \partial\mathbb R_+^N$, then we have $v=v^{\lambda_1},w=w^{\lambda_1},z=z^{\lambda_1}$ or
$v=v^{\lambda_1'},w=w^{\lambda_1'},z=z^{\lambda_1'}$. This implies that $v,w,z$ are regular at the origin,
and hence $u,m,n$ are regular at infinity. Since $v=v^{\lambda_1},w=w^{\lambda_1},z=z^{\lambda_1}$ or
$v=v^{\lambda_1'},w=w^{\lambda_1'},z=z^{\lambda_1'}$, then we infer from equation \eqref{2.5} that $h,k,H,K$ are constants.
\end{proof}

After the above preparations, we can prove Theorem \ref{1.1} now.
\begin{proof}[Proof of Theorem \ref{t 1.1}:]
Since at least one of the functions $h,k,H,K$ is not a constant in $(0,\sup_{x\in \mathbb R_+^N}u(x))$ or $(0,\sup_{x\in \partial\mathbb R_+^N}u(x))$, then
Proposition \ref{t 3.3} implies that the Kelvin's transform $v,w,z$ of
$u,m,n$ at $p$ is symmetric with respect to any plane which is parallel
to $x_N$ direction and passing through $p$ for any $p\in  \partial\mathbb R_+^N$.
Since $p$ is arbitrary, we conclude that $u,m$ depend only on $x_N$ and $n$ is a constant.
Then we deduce from equation \eqref{1.1} that
$$
\left\{
  \begin{array}{ll}
  \displaystyle
-\frac{d^2u(y_N)}{dy_N^2}=\intpr \frac{
F(u(0))}{|(x',0)-y|^{N-\alpha}}dx'g(u(y_N))    &{\rm in}\quad \mathbb R^+,    \\
\\ \displaystyle
 -\frac{\partial u}{\partial x_N}(0)=\intr \frac{G(u(y_N))}{|(x',0)-y|^{N-\alpha}}\,dy f(u(0)).
\end{array}
\right.
$$
The first equation implies that $u$ is concave in $x_N$ direction. It also follows from
the second equation that $\frac{d u}{d x_N}(0)=-\intr \frac{G(u(y_N))}{|(x',0)-y|^{N-\alpha}}\,dy f(u(0))\leq 0$. So
we deduce that $u$ is strictly decreasing in $x_N$ unless $u\equiv
c$ with $F(c)=G(c)=0$. If $u\equiv c$, then we proved our result. On
the other hand, if $u$ is strictly decreasing and concave in $x_N$,
then $u(x_N)<0$ for $x_N$ large enough, which contradicts that $u$
is a nonnegative solution for problem \eqref{1.1}.

\end{proof}

{\bf Acknowledgement}:The author would like to thank the anonymous referee for his/her careful reading and useful suggestions of this paper, which indeed improve this paper. This work is supported by Excellent Young Scientists Foundation of Guangdong Province, No. YQ2014154.

\end{document}